\def\blfootnote{\gdef\@thefnmark{}\@footnotetext}
\theoremstyle{plain}
\newtheorem*{theorem*}{Theorem}
\newtheorem{theorem}{Theorem}[section]
\newtheorem{lemma}[theorem]{Lemma}
\newtheorem{proposition}[theorem]{Proposition}
\theoremstyle{remark}
\theoremstyle{Acknowledgments}
\theoremstyle{definition}
 \def\Z{{\mathbb{Z}}}
\def\mod{{\rm Mod}}
 \def\Y{{\overline{Y}}}
\def\A{{\overline{\alpha}}}
\begin{document}
\blfootnote{\textup{2000} \textit{Mathematics Subject Classification}:
57N05, 20F38, 20F05}
\blfootnote{\textit{Keywords}:
Mapping class groups, nonorientable surfaces, crosscap slide, $Y$-homeomorphism, involutions}
\newenvironment{prooff}{\medskip \par \noindent {\it Proof}\ }{\hfill
$\square$ \medskip \par}
    \def\sqr#1#2{{\vcenter{\hrule height.#2pt
        \hbox{\vrule width.#2pt height#1pt \kern#1pt
            \vrule width.#2pt}\hrule height.#2pt}}}
    \def\square{\mathchoice\sqr67\sqr67\sqr{2.1}6\sqr{1.5}6}
\def\pf#1{\medskip \par \noindent {\it #1.}\ }
\def\endpf{\hfill $\square$ \medskip \par}
\def\demo#1{\medskip \par \noindent {\it #1.}\ }
\def\enddemo{\medskip \par}
\def\qed{~\hfill$\square$}

 \title[Generating the Level $2$ Subgroup by Involutions] {Generating the Level $2$ Subgroup by Involutions}

\author[T{\"{u}}l\.{i}n Altun{\"{o}}z,  Naoyuki Monden,   Mehmetc\.{i}k Pamuk, and O\u{g}uz Y{\i}ld{\i}z ]{T{\"{u}}l\.{i}n Altun{\"{o}}z,  Naoyuki Monden,  Mehmetc\.{i}k Pamuk, and O\u{g}uz Y{\i}ld{\i}z}

\address{Department of Mathematics, Middle East Technical University,
 Ankara, Turkey}
\email{atulin@metu.edu.tr} 
\email{mpamuk@metu.edu.tr} \email{e171987@metu.edu.tr}
\address{Department of Mathematics, Faculty of Science, Okayama University, Okayama, Japan}
\email{n-monden@okayama-u.ac.jp}

\begin{abstract}
We obtain a minimal generating set of involutions for the level $2$ subgroup of the mapping class group of a closed nonorientable surface.
\end{abstract}

 \maketitle
  \setcounter{secnumdepth}{2}
 \setcounter{section}{0} 
 
\section{Introduction} 
Let $N_g$ be a closed nonorientable surface of genus $g\geq2$. The mapping class group $\mod(N_g)$ is defined to be the group of isotopy classes of all diffeomorphisms of $N_g$. The first homology group $H_1(N_g;\Z)$ is generated by $\lbrace x_1,x_2,\ldots, x_g \rbrace$, where $x_i$ for $1\leq i \leq g$ are the homology classes of one-sided curves as depicted in Figure~\ref{Xi}. 

 \begin{figure}[hbt!]
\begin{center}
\scalebox{0.3}{\includegraphics{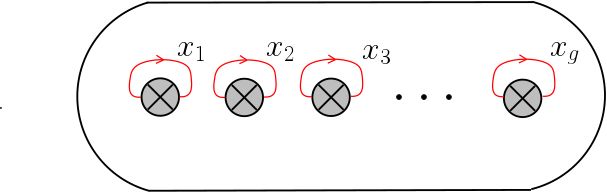}}
\caption{Generators of $H_1(N_g;\Z)$.}
\label{Xi}
\end{center}
\end{figure}

The $\Z_2$-homology classes $\overline{x}_i$ of these curves form a basis for $H_1(N_g;\Z/2\Z)$. The $\Z_2$-valued intersection pairing is a symmetric bilinear form $\langle\,,\rangle$ on $H_1(N_g;\Z/2\Z)$ satisfying $\langle \overline{x}_i, \overline{x}_j\rangle=\delta_{ij}$ for $1\leq i,j \leq g$. For more on automorphisms of $H_1(N_g;\Z/2\Z)$ and $\Z_2$-valued intersection pairings we refer the reader to~\cite{mp}.
Let $\rm{Iso}(H_1(N_g;\Z/2\Z)$ be the group of automorphisms of $H_1(N_g;\Z/2\Z)$ which preserve $\langle\,,\rangle$. \textit{The level $2$ subgroup} 
$\Gamma_2(N_g)$ of $\mod(N_g)$ is the group of isotopy classes of diffeomorphisms which act trivially on $H_1(N_g;\Z/2\Z)$. It fits into the following short exact sequence:
\[
1\longrightarrow \Gamma_2(N_g))\longrightarrow \mod(N_g) \longrightarrow \rm{Iso}(H_1(N_g;\Z/2\Z)\longrightarrow 1.
\]

For a two-sided simple closed curve $\alpha$ and a one-sided simple closed curve $\mu$ which intersect in one point, let $K$ denote a regular neighborhood of 
$\mu\cup\alpha$ that is homeomorphic to the Klein bottle with a hole. Let $M\subset K$ be a regular neighbourhood of $\mu$, which is a M{\"{o}}bius strip. 
We define the  \textit{crosscap slide} (also called \textit{$Y$-homeomorphism}) $Y_{\mu,\alpha}$ as the self-diffeomorphism of $N_g$ obtained from sliding $M$ once 
along $\alpha$ and fixing each point of the boundary of $K$ (Figure~\ref{Y}).
 
 \begin{figure}[hbt!]
\begin{center}
\scalebox{0.3}{\includegraphics{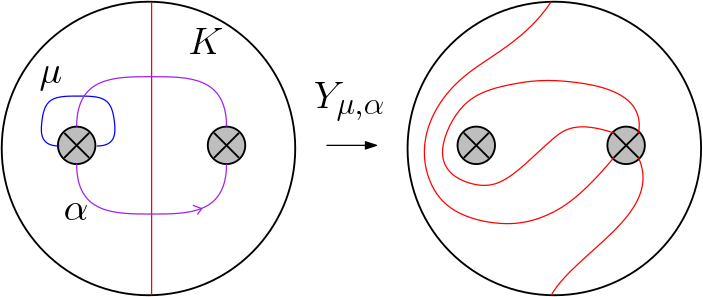}}
\caption{The crosscap slide $Y_{\mu,\alpha}$.}
\label{Y}
\end{center}
\end{figure}

For $I=\lbrace i_1,i_2,\ldots, i_k\rbrace$ a subset of $\lbrace1,2,\ldots, g\rbrace$, let $\alpha_I$ be the simple closed curve shown in Figure~\ref{A}. 
Throughout the paper, we introduce the following notations:
\begin{itemize}
\item[$\bullet$]$Y_{i_1;i_2,\ldots,i_k}=Y_{\alpha_{i_1};\alpha_{\lbrace i_1,i_2,\ldots,i_k \rbrace }}$,
\item[$\bullet$]$T_{i_1,i_2,\ldots,i_k}=T_{\alpha_{\lbrace i_1,i_2,\ldots,i_k \rbrace}}$,
\item[$\bullet$] $\alpha_{i}=\alpha_{\lbrace i,i \rbrace }$.
\end{itemize}

 \begin{figure}[hbt!]
\begin{center}
\scalebox{0.3}{\includegraphics{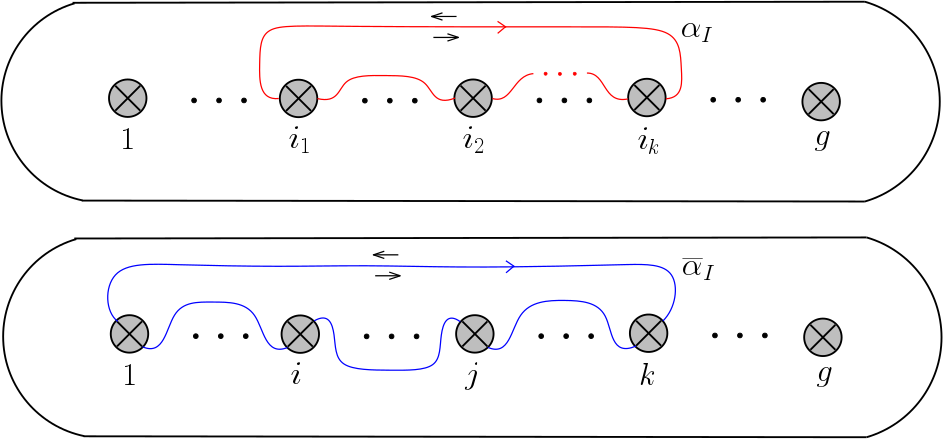}}
\caption{The curves $\alpha_I$ and  $\overline{\alpha}_I$ for $I=\lbrace i_1,i_2,\ldots,i_k \rbrace$.}
\label{A}
\end{center}
\end{figure}

Szepietowski proved that $\Gamma_2(N_g)$ is equal to the subgroup of $\mod(N_g)$ generated by all crosscap slides~\cite[Theorem $5.5$]{sz1}. 
Moreover, he proved that $\Gamma_2(N_g)$ can be generated by (infinitely many) involutions~\cite[Theorem $3.7$]{sz1}. 
In~\cite{sz3}, Szepietowski also gave a finite set of generators for $\Gamma_2(N_g)$. 
Later, Hirose and Sato reduced the number of generators of $\Gamma_2(N_g)$,  
their generating set  is as follows~\cite[Theorem $1.2$]{hs}.
\begin{theorem}\label{ts}
For $g\geq 4$, the level $2$ subgroup $\Gamma_2(N_g)$ is generated by the following two types of elements:
\begin{enumerate}
\item $Y_{i;j}$ for $i\in \lbrace 1,2,\ldots,g-1\rbrace$,  $j\in \lbrace 1,2,\ldots,g\rbrace$ and $i\neq j$;
\item $T_{1,j,k,l}^{2}$ for $1<j<k<l$.
\end{enumerate}
\end{theorem}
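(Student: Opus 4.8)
\medskip\noindent\emph{Proof strategy.}
The plan is to take a known generating set of $\Gamma_2(N_g)$ and rewrite each of its elements in terms of the families (1) and (2) above. Write $H\le\mod(N_g)$ for the subgroup generated by all the $Y_{i;j}$ and all the $T_{1,j,k,l}^{2}$. Each $Y_{i;j}$ lies in $\Gamma_2(N_g)$ by construction, and each $T_{1,j,k,l}^{2}$ acts trivially on $H_1(N_g;\Z/2\Z)$ and so lies in $\Gamma_2(N_g)$ by definition; hence $H\subseteq\Gamma_2(N_g)$, and the content of the theorem is the reverse inclusion. By \cite[Theorem 5.5]{sz1} the group $\Gamma_2(N_g)$ is generated by all crosscap slides $Y_{\mu,\alpha}$ (and by \cite{sz3} it admits a finite generating set, which one could take as an alternative starting point); it therefore suffices to prove that $H$ contains every crosscap slide $Y_{\mu,\alpha}$.

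I would first assemble the elementary moves available inside $H$. Conjugation gives $f\,Y_{\mu,\alpha}\,f^{-1}=Y_{f(\mu),f(\alpha)}$ and $f\,T_{\beta}\,f^{-1}=T_{f(\beta)}$ for $f\in H$, so $H$ is closed under transporting configurations and curves by any homeomorphism it already contains. The square of a crosscap slide is the Dehn twist $Y_{\mu,\alpha}^{2}=T_{\partial K}$ about the boundary of the Klein-bottle-with-hole neighbourhood $K$ of $\mu\cup\alpha$, so $H$ contains all such boundary twists. A product of two crosscap slides lies in the subgroup generated by Dehn twists, so words such as $Y_{i;j}^{\pm1}Y_{i;k}^{\mp1}$ can be rewritten as products of Dehn twists about two-sided curves, enlarging the supply of twists inside $H$. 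Finally, the lantern and chain relations among the curves $\alpha_I$ — which need the four-element index sets that exist precisely because $g\ge4$ — convert between twists about the non-separating curves $\alpha_{\{i,j,k,l\}}$ and twists about separating curves; squaring these relations keeps every factor inside $\Gamma_2(N_g)$, and checking factor by factor one shows that $H$ contains $T_{\alpha_{\{i,j,k,l\}}}^{2}$ for all $i<j<k<l$, not merely for $i=1$, together with the relevant separating twists.

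The heart of the argument is then to show $Y_{\mu,\alpha}\in H$ for an arbitrary crosscap slide. The $\mod(N_g)$-conjugacy class of $Y_{\mu,\alpha}$ depends only on the homeomorphism type of the embedding $K\hookrightarrow N_g$; but since we may conjugate only by elements of $\Gamma_2(N_g)$, the actual invariants are this homeomorphism type together with the classes $\overline{[\mu]}=\sum_{i\in S}\overline{x}_i$ and $\overline{[\alpha]}=\sum_{i\in T}\overline{x}_i$, where $\lvert S\rvert$ is odd, $\lvert T\rvert$ is even, and $\lvert S\cap T\rvert$ is odd because $\mu$ and $\alpha$ meet in one point. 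I would induct on $\lvert S\rvert$. When $\lvert S\rvert=1$ the twists assembled above can be used to conjugate $(\mu,\alpha)$, within $\Gamma_2(N_g)$, to a standard pair $(\alpha_i,\alpha_{\{i,j\}})$, so $Y_{\mu,\alpha}\in H$; here the hypothesis $g\ge4$ supplies the spare crosscaps needed to realise the change of coordinates by an element of $H$. For the inductive step one splits the guiding loop $\alpha$ as a concatenation and uses a crosscap-pushing factorisation of the shape $Y_{\mu,\alpha_1\alpha_2}=Y_{\mu,\alpha_1}\,Y_{\mu',\alpha_2}\cdot(\text{boundary twist})$, or else exhibits $\mu$ as the image of a one-sided curve of smaller weight under a crosscap slide already known to be in $H$; in either case $\lvert S\rvert$ drops and the correction terms are twists already placed in $H$.

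The step I expect to be the main obstacle is exactly this induction. One may not appeal to the full mapping class group for the change of coordinates, so every auxiliary homeomorphism must be built out of (1) and (2), and the crosscap-pushing factorisations come with boundary-twist and twist-square correction terms whose membership in $H$ has to be tracked at each stage. Carrying out this bookkeeping — equivalently, showing that the twist squares $T_{1,j,k,l}^{2}$ together with the simple slides $Y_{i;j}$ really do produce the homologically more complicated crosscap slides — is where the genuine work lies, and it is also where the bound $g\ge4$ is used.
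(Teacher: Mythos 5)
The first thing to note is that the paper does not prove this statement at all: Theorem~\ref{ts} is quoted from Hirose and Sato~\cite[Theorem 1.2]{hs} and used as a black box (the authors' own contribution, Theorem~\ref{t1}, is a variant of it obtained by conjugating the generators). So there is no in-paper proof to compare yours against; the relevant comparison is with the Hirose--Sato argument, which in turn builds on Szepietowski's finite generating set from~\cite{sz3}.

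Your outline follows the right general route --- start from Szepietowski's result that $\Gamma_2(N_g)$ is generated by all crosscap slides and reduce to the finite list (1)--(2) --- and the elementary observations you assemble (that $H\subseteq\Gamma_2(N_g)$, that $Y_{\mu,\alpha}^2$ is the boundary twist of the Klein-bottle neighbourhood, that a product of two crosscap slides lies in the twist subgroup) are all correct. But as written this is a plan rather than a proof, and the two genuinely hard steps are both deferred. First, the passage from ``all crosscap slides'' to slides and twist squares supported on the standard curves $\alpha_I$ is exactly the content of Szepietowski's finite-generation theorem in~\cite{sz3}; your proposed induction on $\lvert S\rvert$ via crosscap-pushing factorisations has the right shape, but the correction terms it produces (boundary twists of Klein-bottle neighbourhoods, i.e.\ twists about non-standard and possibly separating curves) must themselves be shown to lie in $H$, and that bookkeeping is where the argument actually lives. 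Second, the reduction of $T_{i,j,k,l}^2$ for $i>1$ to the case $i=1$ together with the $Y_{i;j}$ is the substance of Hirose--Sato's own contribution and requires explicit lantern-type relations among squares of Dehn twists; the phrase ``checking factor by factor one shows'' names precisely the claim that needs proving. Neither step would fail --- both are known to work --- but neither is carried out, so the proposal does not yet establish the theorem.
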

Note that when $g=3$, the group $\Gamma_2(N_3)$ is generated only by the elements of type $(1)$.  Hirose and Sato~\cite[Theorem $1.4$]{hs} also showed that 
for $g \geq 4$
$$
H_1(\Gamma_2(N_g); \Z) \cong (\Z/2\Z)^{{g \choose 2}+{g \choose 3}},
$$
\noindent
which in turn implies that the above generating set is minimal.

In this paper, our purpose is to give a minimal generating set of involutions for the level $2$ subgroup $\Gamma_2(N_g)$.

\section{A generating set for $\Gamma_2(N_g)$} 
Let us start this section by introducing bar notation for two-sided simple closed curves.  In the remainder of this paper, 
let  $\A_{1,i,j,k}$ and $\A_{i,j}$ be  two sided simple closed curves depicted in Figure~\ref{BA}.  Observe that when we put a bar over a two-sided simple closed curve
it passes below the in-between crosscaps.  For the ease of notation,  we also 
use the following notations:

\begin{itemize}
\item[$\bullet$]$\Y_{i,j}=Y_{\alpha_{i};\A_{i,j}}$,
\item[$\bullet$]$\overline{T}_{1,i,j,k}=T_{\A_{1,i,j,k}}$.
\end{itemize}

Recall that $\Gamma_2(N_g)$ is generated by all crosscap slides~\cite[Theorem $5.5$]{sz1}.
Let $\mathcal{Y}$ and $\overline{\mathcal{Y}}$ be the subgroups of $\Gamma_2(N_g)$ generated by elements of the form
$Y_{i,j}$ and $\overline{Y}_{i,j}$, for $i\in \lbrace 1,2,\ldots,g-1\rbrace$, $j\in \lbrace 1,2,\ldots,g\rbrace$ and $i\neq j$, respectively.
 \begin{figure}[hbt!]
\begin{center}
\scalebox{0.3}{\includegraphics{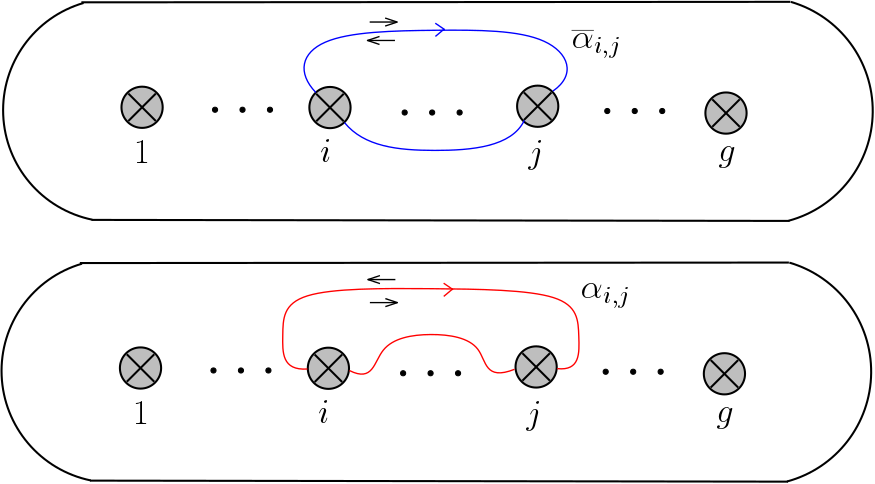}}
\caption{The curves $\A_{i,j}$ and $\alpha_{i,j}$  for $1<j<k<l$.}
\label{BA}
\end{center}
\end{figure}

\begin{lemma}\label{lemy}  
The subgroups $\mathcal{Y}$ and $\overline{\mathcal{Y}}$ are equal to each other.
\end{lemma}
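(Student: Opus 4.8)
The plan is to realize the passage from the curves $\alpha_{i,j}$ to the barred curves $\A_{i,j}$ by an explicit mapping class, and then to invoke the standard conjugation rule for crosscap slides, $h\,Y_{\mu,\alpha}\,h^{-1}=Y_{h(\mu),h(\alpha)}$ for $h\in\mod(N_g)$, which is immediate from the definition of $Y_{\mu,\alpha}$. Fix indices with $i\in\{1,\dots,g-1\}$, $j\in\{1,\dots,g\}$ and $i\neq j$; by symmetry we may assume $i<j$ (the case $i>j$ being handled by the mirror construction, sliding the crosscaps between $\alpha_j$ and $\alpha_i$). We induct on $d=j-i$. If $d=1$ there are no crosscaps strictly between $\alpha_i$ and $\alpha_j$, so $\A_{i,j}=\alpha_{i,j}$ and hence $\Y_{i,j}=Y_{i,j}$; in particular this element lies in $\mathcal{Y}\cap\overline{\mathcal{Y}}$.

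For the inductive step, the geometric heart of the argument is the claim that there is a mapping class $h$, a product of crosscap slides with one factor $Y_{m,l}^{\pm1}$ for each crosscap $\alpha_m$ with $i<m<j$ and with $l\in\{i,i+1,\dots,j-1\}\setminus\{m\}$, such that $h$ fixes the one--sided curve $\alpha_i$ up to isotopy and carries $\alpha_{i,j}$ to $\A_{i,j}$. Intuitively $h$ is built by sliding the crosscaps $\alpha_{j-1},\alpha_{j-2},\dots,\alpha_{i+1}$, one at a time, from above the connecting arc of $\alpha_{i,j}$ to below it (for instance by sliding $\alpha_m$ along $\alpha_{\{m,i\}}$), each such slide moving a single crosscap and leaving $\alpha_i$ untouched. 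Granting the claim, every factor $Y_{m,l}^{\pm1}$ of $h$ has first index $m$ with $2\le m\le j-1\le g-1$, hence is an admissible generator, and has index difference $|m-l|\le j-1-i<d$; by the inductive hypothesis each such factor therefore lies in both $\mathcal{Y}$ and $\overline{\mathcal{Y}}$, so $h\in\mathcal{Y}\cap\overline{\mathcal{Y}}$. The conjugation rule now gives
\[
\Y_{i,j}=Y_{\alpha_i,\A_{i,j}}=Y_{h(\alpha_i),h(\alpha_{i,j})}=h\,Y_{\alpha_i,\alpha_{i,j}}\,h^{-1}=h\,Y_{i,j}\,h^{-1}.
\]
Since $h,Y_{i,j}\in\mathcal{Y}$ this shows $\Y_{i,j}\in\mathcal{Y}$, and since $h,\Y_{i,j}\in\overline{\mathcal{Y}}$ the rewriting $Y_{i,j}=h^{-1}\Y_{i,j}\,h$ shows $Y_{i,j}\in\overline{\mathcal{Y}}$. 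This closes the induction, and as $i,j$ were arbitrary we conclude $\mathcal{Y}=\overline{\mathcal{Y}}$.

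The only nonroutine point is the geometric claim, which is a picture--chasing argument on the sphere--with--crosscaps model using the explicit curves of Figures~\ref{A} and~\ref{BA}: one must verify that sliding each in--between crosscap along the prescribed two--sided curve pushes exactly one strand of $\alpha_{i,j}$ from one side of that crosscap to the other, disturbs no other crosscap, and leaves $\alpha_i$ fixed, so that the composite carries $\alpha_{i,j}$ precisely onto $\A_{i,j}$. Some care is needed with the direction of each slide (equivalently, with the orientation chosen on each two--sided curve) so that the product of $\pm1$ powers is exactly the desired diffeomorphism; but since $\mathcal{Y}$ and $\overline{\mathcal{Y}}$ are closed under inverses, this orientation bookkeeping does not affect membership. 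I expect this verification — together with the bookkeeping ensuring the two--sided curve used at each step has index difference less than $d$ — to be the main obstacle.
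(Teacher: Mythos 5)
Your overall strategy coincides with the paper's: both proofs obtain $\Y_{i,j}$ from $Y_{i,j}$ by conjugating with a product of crosscap slides of strictly smaller index difference, and both close the argument by an induction on $|i-j|$ (in the paper this induction is implicit in the progression $|i-j|=1,2,3,\dots$) together with the observation that the displayed equalities can be inverted to give the reverse inclusion. So the skeleton is right. The problem is that you have deferred exactly the part of the argument that carries all of the content. The lemma is not a formal consequence of the conjugation rule $hY_{\mu,\alpha}h^{-1}=Y_{h(\mu),h(\alpha)}$; it rests entirely on exhibiting a specific $h$ and verifying its action on the pair $(\alpha_i,\alpha_{i,j})$, which is what the paper's Figures~\ref{Y1}--\ref{Y3} do. Labelling this ``picture--chasing'' and ``the main obstacle'' without carrying it out leaves the proof incomplete.

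Moreover, the one concrete instantiation you offer is doubtful. You propose to move the in--between crosscap $\alpha_m$ ($i<m<j$) by the slide $Y_{\alpha_m,\alpha_{\{m,i\}}}$. But $\alpha_{\{m,i\}}$ passes through the $i$-th crosscap and meets $\alpha_i$ in one point, so the Klein--bottle neighbourhood supporting this slide contains a neighbourhood of the $i$-th crosscap; the assertion that such a slide ``leaves $\alpha_i$ untouched'' is precisely what fails to be automatic, and it is the reason the paper does \emph{not} use these conjugators. Instead, for $i<j$ the paper conjugates by $\Y_{i+1,j}^{-1}\cdots\Y_{j-1,j}^{-1}$: the supporting curves $\alpha_m\cup\A_{m,j}$ of each factor are disjoint from $\alpha_i$ (they involve only the crosscaps $m,m+1,\dots,j$ with $m>i$), so fixing $\alpha_i$ is immediate and only the effect on $\alpha_{i,j}$ needs to be checked; the smaller--span condition still holds since $j-m<j-i$, even though the second index $j$ lies outside your prescribed range $\{i,\dots,j-1\}$. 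A symmetric choice (slides $\Y_{m,j}$ for $j<m<i$, supported away from $\alpha_i$) handles $i>j$. To complete your proof you would need either to switch to conjugators of this kind, or to actually verify that your slides fix $\alpha_i$ up to isotopy and compose to carry $\alpha_{i,j}$ onto $\A_{i,j}$ --- neither of which is done in the proposal.
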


\begin{proof}
Let us first  show that $\overline{\mathcal{Y}}\subseteq \mathcal{Y}$.  For  $\overline{Y}_{i, j} \in \overline{\mathcal{Y}}$,
if we assume that $\mid i-j \mid=1$, since  
\[
\overline{Y}_{i,i+1}=Y_{i,i+1} \textrm{ and } \overline{Y}_{i+1,i}=Y_{i+1,i}
\]
for all $i=1,2,\ldots,g-1$, we have $\overline{Y}_{i,j}\in \mathcal{Y}$. 
Assume now that $\mid i-j \mid>1$: 
For $i<j$, let us first consider the case $j-i=2$. It is easy to verify that
\[
\overline{Y}_{i+1,i+2}^{-1}(\alpha_i,\alpha_{i,i+2})=(\alpha_i,\overline{\alpha}_{i,i+2}),
\]
for all $i=1,\ldots, g-2$ (see Figure~\ref{Y1}).  Using $\Y_{i+1,i+2}=Y_{i+1,i+2}\in \mathcal{Y}$, we have
\[
\Y_{i,i+2}=\Y_{i+1,i+2}^{-1}Y_{i,i+2}\Y_{i+1,i+2} \in \mathcal{Y}.
\]

\begin{figure}[hbt!]
\begin{center}
\scalebox{0.35}{\includegraphics{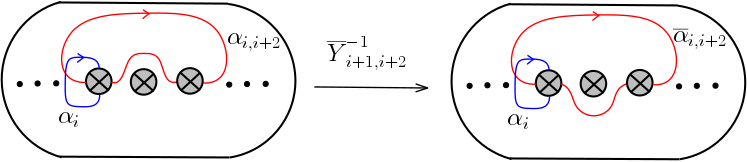}}
\caption{$\overline{Y}_{i+1,i+2}^{-1}(\alpha_i,\alpha_{i,i+2})=(\alpha_i,\overline{\alpha}_{i,i+2}).$}
\label{Y1}
\end{center}
\end{figure}

 For the case $j-i=3$, one can see that (see Figure~\ref{Y2})
\[
\Y_{i+1,i+3}^{-1}\Y_{i+2,i+3}^{-1}(\alpha_i,\alpha_{i,i+3})=(\alpha_i,\overline{\alpha}_{i,i+3}).
\]
\begin{figure}[hbt!]
\begin{center}
\scalebox{0.35}{\includegraphics{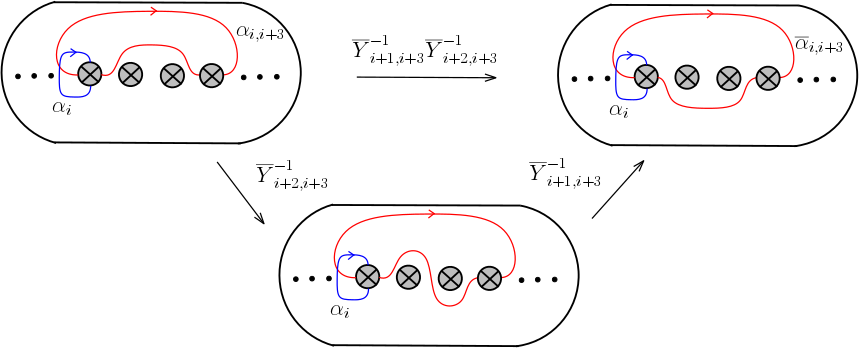}}
\caption{$\Y_{i+1,i+3}^{-1}\Y_{i+2,i+3}^{-1}(\alpha_i,\alpha_{i,i+3})=(\alpha_i,\overline{\alpha}_{i,i+3}).$}
\label{Y2}
\end{center}
\end{figure}
Now, since $\Y_{i+2,i+3}$ and $\Y_{i+1,i+3}$ are all contained in $\mathcal{Y}$  we have
\[
\displaystyle
\Y_{i,i+3}=(\Y_{i+1,i+3}^{-1}\Y_{i+2,i+3}^{-1})Y_{i,i+2}(\Y_{i+1,i+3}^{-1}\Y_{i+2,i+3}^{-1})^{-1}\in \mathcal{Y} 
\]
for $i=1,\ldots, g-3$.  For the remaining $i<j$ cases, one can see that 
\[
\displaystyle
\Y_{i,j}=(\Y_{i+1,j}^{-1}\Y_{i+2,j}^{-1}\cdots \Y_{j-1,j}^{-1})Y_{i,j}(\Y_{i+1,j}^{-1}\Y_{i+2,j}^{-1}\cdots \Y_{j-1,j}^{-1})^{-1}\in \mathcal{Y}
\]
for all $i=1,\ldots, g-1$, $j=1,\ldots,g$.

Now, we consider the cases where $i>j$. For $i-j>2$, we have (see Figure~\ref{Y3})
\[
\Y_{i+1,i}(\alpha_{i+2},\alpha_{i,i+2})=(\alpha_{i+2},\overline{\alpha}_{i,i+2}).
\]
\begin{figure}[hbt!]
\begin{center}
\scalebox{0.35}{\includegraphics{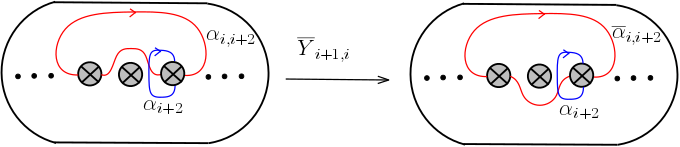}}
\caption{$\Y_{i+1,i}(\alpha_{i+2},\alpha_{i,i+2})=(\alpha_{i+2},\overline{\alpha}_{i,i+2}).$}
\label{Y3}
\end{center}
\end{figure}

Using $\Y_{i+1, i}\in \mathcal{Y}$ for $i=1,\ldots,g-3$, we get
\[
\Y_{i+2, i}=\Y_{i+1, i}Y_{i+2,i}\Y_{i+1, i}^{-1}\in \mathcal{Y}.
\]
As before, for all $i=1,\ldots, g-1$ and $j=1,\ldots, g-2$, we have
\[
\Y_{i,j}=(\Y_{i,j-1}\cdots\Y_{i,i+1})Y_{i,j}(\Y_{i,j-1}\cdots\Y_{i,i+1})^{-1}\in \mathcal{Y}.
\]
Thus, $\Y_{i,j}\in \mathcal{Y}$ for $1\leq i<j \leq g$.  
Since we cover all the cases, we have shown  $\overline{\mathcal{Y}}\subseteq \mathcal{Y}$.
For the reverse inclusion, note that we have the following equalities
\begin{equation}
    Y_{i,j}=
    \begin{cases}
     (\Y_{i+1,j}^{-1}\cdots \Y_{j-1,j}^{-1})^{-1}\Y_{i,j}  (\Y_{i+1,j}^{-1}\cdots \Y_{j-1,j}^{-1}), & \text{if}\ i<j, \\
       (\Y_{i,j-1}^{-1}\cdots \Y_{i,i+1}^{-1})^{-1}\Y_{i,j}(\Y_{i,j-1}^{-1}\cdots \Y_{i,i+1}^{-1}), & \text{if}\ i>j,
    \end{cases}
  \end{equation}
which immediately imply that $\mathcal{Y}\subseteq \overline{\mathcal{Y}}$.
\end{proof}

Next, we present  a minimal generating set for the level $2$ subgroup $\Gamma_2(N_g)$ (cf.~\cite[Theorem $1.2$]{hs}).
\begin{theorem}\label{t1}
For $g\geq4$, the level $2$ subgroup $\Gamma_2(N_g)$ can be generated by 
\begin{enumerate}
\item $\Y_{i,j}$ for $i \in \lbrace 1,\ldots,g-1 \rbrace$, $j\in \lbrace 1,\ldots,g \rbrace$ and $i\neq j$,
\item $\overline{T}_{1,i,j,k}^{2}$ for $1<i<j<k$.
\end{enumerate}
\end{theorem}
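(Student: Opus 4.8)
The plan is to deduce Theorem~\ref{t1} from Theorem~\ref{ts} by showing that the two generating sets differ by a change of basis that stays inside $\Gamma_2(N_g)$. Concretely, I would prove that the subgroup generated by the new generators $\{\Y_{i,j}\} \cup \{\overline{T}_{1,i,j,k}^2\}$ coincides with the subgroup generated by the Hirose--Sato generators $\{Y_{i;j}\} \cup \{T_{1,j,k,l}^2\}$, which by Theorem~\ref{ts} is all of $\Gamma_2(N_g)$. Lemma~\ref{lemy} already does half of the work for the crosscap slides: it shows $\mathcal{Y} = \overline{\mathcal{Y}}$, i.e. the subgroup generated by the $\Y_{i,j}$ equals the subgroup generated by the $Y_{i,j}$ (which is Hirose--Sato's type~(1) family, up to the notational identification $Y_{i;j}=Y_{i,j}$ after a possible adjustment; I would first double-check that these are literally the same elements). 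So it remains to handle the type~(2) generators.

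The key step, then, is to show that modulo $\mathcal{Y}$ the squared Dehn twist $\overline{T}_{1,i,j,k}^2$ and the squared Dehn twist $T_{1,i,j,k}^2$ generate the same thing. Here I would exploit that the barred curve $\A_{1,i,j,k}$ is obtained from $\alpha_{1,i,j,k}$ by pushing it below the in-between crosscaps, and that this operation can be realized by conjugating by an appropriate product of crosscap slides $\Y_{\ast,\ast}$, exactly in the spirit of Figures~\ref{Y1}--\ref{Y3} in the proof of Lemma~\ref{lemy}. That is, I expect a relation of the form $\A_{1,i,j,k} = \varphi(\alpha_{1,i,j,k})$ for some $\varphi$ in the subgroup $\mathcal{Y}$, whence $\overline{T}_{1,i,j,k}^2 = T_{\varphi(\alpha_{1,i,j,k})}^2 = \varphi\, T_{1,i,j,k}^2\, \varphi^{-1}$. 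Since $\varphi \in \mathcal{Y}$ and the $\Y_{i,j}$ are already among our generators, this shows $\overline{T}_{1,i,j,k}^2$ lies in the group generated by $\mathcal{Y}$ together with the $T_{1,i,j,k}^2$, and conversely $T_{1,i,j,k}^2 = \varphi^{-1}\overline{T}_{1,i,j,k}^2\varphi$ gives the reverse containment. Combining this with Lemma~\ref{lemy} and Theorem~\ref{ts} yields that the new set generates $\Gamma_2(N_g)$.

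I would organize the write-up as: (i) note $Y_{i;j}$ and $\Y_{i,j}$-versus-$Y_{i,j}$ identifications and invoke Lemma~\ref{lemy} to replace the type~(1) generators; (ii) exhibit the curve-level identity $\A_{1,i,j,k} = \varphi_{i,j,k}(\alpha_{1,i,j,k})$ with $\varphi_{i,j,k}$ an explicit word in the $\Y_{\ast,\ast}$, with a figure analogous to Figure~\ref{Y1}; (iii) conclude the twist identity $\overline{T}_{1,i,j,k}^{2} = \varphi_{i,j,k} T_{1,i,j,k}^{2} \varphi_{i,j,k}^{-1}$ and hence the mutual containment of the two generating sets modulo $\mathcal{Y}$; (iv) assemble the pieces. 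For minimality I would simply cite the homology computation $H_1(\Gamma_2(N_g);\Z)\cong(\Z/2\Z)^{\binom{g}{2}+\binom{g}{3}}$ from \cite{hs}, since the new set has the same cardinality $\binom{g}{2}+\binom{g}{3}$ as the Hirose--Sato set.

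The main obstacle I anticipate is step~(ii): carefully identifying the correct product of crosscap slides that drags $\alpha_{1,i,j,k}$ underneath the in-between crosscaps to produce $\A_{1,i,j,k}$. The index $1$ being fixed while $i,j,k$ vary means the "sliding" path is more complicated than in the two-crosscap cases of Lemma~\ref{lemy}, and one must be careful that the conjugating element is supported away from the crosscaps whose homology classes would be disturbed, so that it genuinely lies in $\mathcal{Y}$ and not merely in $\mod(N_g)$. A clean way to control this is to slide one crosscap at a time, using the same elementary moves $\overline{Y}_{i+1,i+2}^{-1}(\alpha_i,\alpha_{i,i+2}) = (\alpha_i,\overline{\alpha}_{i,i+2})$ etc. already established, and compose; the bookkeeping is routine but must be done with an accurate picture.
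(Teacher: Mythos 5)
Your proposal follows essentially the same route as the paper: invoke Theorem~\ref{ts}, use Lemma~\ref{lemy} to handle the crosscap-slide generators, and conjugate $T_{1,i,j,k}^{2}$ to $\overline{T}_{1,i,j,k}^{2}$ by an explicit word in the $\Y_{\ast,\ast}$ realizing the curve-level identity $\A_{1,i,j,k}=\varphi(\alpha_{1,i,j,k})$ (the paper takes $\varphi=\Y_{i+1,j}^{-1}\cdots\Y_{j-1,j}^{-1}$). The one piece you leave open, the explicit form of $\varphi_{i,j,k}$, is exactly what the paper supplies, and your proposed method of sliding one crosscap at a time is the right way to find it.
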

\begin{proof}
Let $G$ be the subgroup of $\Gamma_2(N_g)$ generated by the elements given in $(1)$ and $(2)$. 
Since by Lemma~\ref{lemy} we have $\mathcal{Y}=\overline{\mathcal{Y}}$, it is enough to prove that ${T}_{1,i,j,k}^{2}$ 
is contained in the subgroup $G$ for $1<i<j<k$.

It is easy to check that 
\[
\Y_{i+1,j}^{-1}\cdots \Y_{j-2,j}^{-1}\Y_{j-1,j}^{-1}(\alpha_{1,i,j,k})=\overline{\alpha}_{1,i,j,k}.
\]
Thus
\[
\overline{T}_{1,i,j,k}^{2}=(\Y_{i+1,j}^{-1}\cdots \Y_{j-2,j}^{-1}\Y_{j-1,j}^{-1})T_{1,i,j,k}^{2}(\Y_{i+1,j}^{-1}\cdots \Y_{j-2,j}^{-1}\Y_{j-1,j}^{-1})^{-1},
\]
which implies that 
\[
T_{1,i,j,k}^{2}=(\Y_{i+1,j}^{-1}\cdots \Y_{j-2,j}^{-1}\Y_{j-1,j}^{-1})^{-1}\overline{T}_{1,i,j,k}^{2}(\Y_{i+1,j}^{-1}\cdots \Y_{j-2,j}^{-1}\Y_{j-1,j}^{-1})\in G
\]
for $1<i<j<k$. This completes the proof.
\end{proof}
\section{Involution generators for $\Gamma_2(N_g)$} 
In this section, we give a generating set of involutions for $\Gamma_2(N_g)$.
Throughout this section, consider the surface $N_g$ as shown in Figure~\ref{R1} so that it is invariant under the reflection $R$ about the indicated plane. 
Note that, $R$ acts trivially on $H_1(N_g;\Z/2\Z)$, which implies 
that it is an element of the subgroup $\Gamma_2(N_g)$.

\begin{figure}[hbt!]
\begin{center}
\scalebox{0.2}{\includegraphics{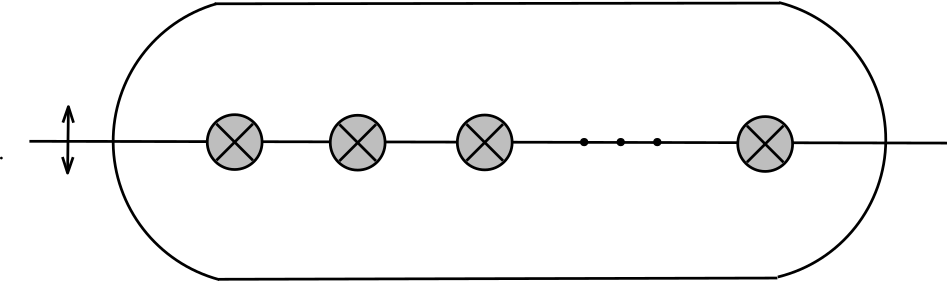}}
\caption{The reflection $R$.}
\label{R1}
\end{center}
\end{figure}

\begin{proposition}\label{p1}
For $g\geq4$, the group $\Gamma_2(N_g)$ can be generated by 
\begin{enumerate}
\item $R$,
\item $R\Y_{i,j}$ for $i \in \lbrace 1,\ldots,g-1 \rbrace$, $j\in \lbrace 1,\ldots,g \rbrace$ and $i\neq j$,
\item $R\Y_{1,i}Y_{\alpha_{k},\overline{\alpha}_{j,k}}\overline{T}_{1,i,j,k}^{2}$ for $1<i<j<k$.
\end{enumerate}
\end{proposition}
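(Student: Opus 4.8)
The plan is to let $G'$ be the subgroup of $\Gamma_2(N_g)$ generated by the elements (1)--(3) and to prove $G'=\Gamma_2(N_g)$; the inclusion $G'\subseteq\Gamma_2(N_g)$ is immediate once one checks that each listed generator already lies in $\Gamma_2(N_g)$. Indeed $R$ does, as noted in the text; every crosscap slide lies in $\Gamma_2(N_g)$ by \cite[Theorem~5.5]{sz1}, so in particular the $\Y_{i,j}$ and $Y_{\alpha_k,\overline{\alpha}_{j,k}}$ do; and $\overline{T}_{1,i,j,k}^2$ does because a Dehn twist about a two-sided curve acts on $H_1(N_g;\Z/2\Z)$ by a transvection, which is an involution, so its square acts trivially. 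Since $\Gamma_2(N_g)$ is a subgroup, the products appearing in (2) and (3) stay inside it.

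For the reverse inclusion I would invoke Theorem~\ref{t1}: it suffices to put every $\Y_{i,j}$ (with $i\le g-1$, $i\ne j$) and every $\overline{T}_{1,i,j,k}^2$ (with $1<i<j<k$) into $G'$. Since $R$ is an involution, from each type-(2) generator one recovers $\Y_{i,j}=R\cdot\bigl(R\Y_{i,j}\bigr)\in G'$, so all the $\Y_{i,j}$ appearing in Theorem~\ref{t1} — hence, by Lemma~\ref{lemy}, the whole group $\mathcal{Y}=\overline{\mathcal{Y}}$ — lie in $G'$. In particular $\Y_{1,i}\in G'$, and $Y_{\alpha_k,\overline{\alpha}_{j,k}}\in G'$ as well, since this crosscap slide equals the generator $\Y_{k,j}$ when $k\le g-1$ and, when $k=g$, still lies in $\overline{\mathcal{Y}}$ by a conjugation identity of the kind used in the proof of Lemma~\ref{lemy}. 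Granting these, rewriting the type-(3) generator yields
\[
\overline{T}_{1,i,j,k}^2=Y_{\alpha_k,\overline{\alpha}_{j,k}}^{-1}\,\Y_{1,i}^{-1}\,R\cdot\bigl(R\Y_{1,i}Y_{\alpha_k,\overline{\alpha}_{j,k}}\overline{T}_{1,i,j,k}^2\bigr)\in G'
\]
for all $1<i<j<k$, so $G'$ contains both families of generators from Theorem~\ref{t1}, and therefore $\Gamma_2(N_g)\subseteq G'$.

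The bulk of the argument is the purely algebraic peeling-off of the factors $R$, $\Y_{1,i}$ and $Y_{\alpha_k,\overline{\alpha}_{j,k}}$ from the generators of types (2) and (3), built on Theorem~\ref{t1}, Lemma~\ref{lemy} and the relation $R^2=1$. The one step that really calls for a surface-level check is that $Y_{\alpha_k,\overline{\alpha}_{j,k}}$ lies in $\mathcal{Y}$ (in particular in the boundary case $k=g$); I expect this to be the main, if minor, obstacle, to be dealt with by the same kind of explicit curve manipulations as in Lemma~\ref{lemy} and Figures~\ref{Y1}--\ref{Y3}.
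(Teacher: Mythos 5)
Your proposal is correct and follows essentially the same route as the paper: peel off $R$ from the type-(2) generators to get $\Y_{i,j}\in G'$, hence $\overline{\mathcal{Y}}=\mathcal{Y}\subseteq G'$ by Lemma~\ref{lemy}, then strip $R$, $\Y_{1,i}$ and $Y_{\alpha_k,\overline{\alpha}_{j,k}}$ from the type-(3) generators to recover $\overline{T}_{1,i,j,k}^{2}$, and conclude via Theorem~\ref{t1}. The only divergence is at the step you flag: the paper disposes of $Y_{\alpha_k,\overline{\alpha}_{j,k}}\in\mathcal{Y}$ by citing Szepietowski's Lemma~3.5 together with Lemma~\ref{lemy}, whereas you observe it is literally the generator $\Y_{k,j}$ for $k\leq g-1$ and sketch a conjugation argument for $k=g$; either justification is adequate.
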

\begin{proof}
Let $G$ be the subgroup generated by the elements listed in the statement of the proposition.  
Since the subgroup $G$ contains $R$ and $R\Y_{i,j}$, it also contains 
\[
\Y_{i,j}=R(R\Y_{i,j})
\]
for $i \in \lbrace 1,\ldots,g-1 \rbrace$, $j\in \lbrace 1,\ldots,g \rbrace$ and $i\neq j$.   Recall that  $\overline{\mathcal{Y}}$ is generated by such elements, hence 
$\overline{\mathcal{Y}}\subseteq G$. By Theorem~\ref{t1}, it remains to prove that $\overline{T}_{1, i, j, k}^{2}$ also belongs to $G$.  Now, it is easy to see that $G$ contains 
\[
\Y_{1,i}Y_{\alpha_{k},\overline{\alpha}_{j,k}}\overline{T}_{1,i,j,k}^{2}=R(R\Y_{1,i}Y_{\alpha_{k},\overline{\alpha}_{j,k}}\overline{T}_{1,i,j,k}^{2}).
\]
The elements $Y_{\alpha_{k},\overline{\alpha}_{j,k}}$ are contained in $\mathcal{Y}= \overline{\mathcal{Y}}$  by \cite[Lemma $3.5$]{sz3} and Lemma~\ref{lemy}. 
Since the elements $\Y_{1,i}$ are also contained in $G$, one can conclude that $\overline{T}_{1,i,j,k}^{2}\in G$ for $1<i<j<k$, which finishes the proof.
\end{proof}
\begin{lemma}\label{lem2}
The reflection $R$ can be expressed as a product of finitely $Y$-homeomorphisms.  In particular  
$$R=\Y_{g-1,g}\Y_{g-2,g}\cdots\Y_{1,g}.$$
\end{lemma}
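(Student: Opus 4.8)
The plan is to exhibit $R$ as an explicit product of $Y$-homeomorphisms by tracking what each factor does to the relevant one-sided curves, and then reading off that the composite agrees with the reflection $R$. Concretely, I would set $P = \Y_{g-1,g}\Y_{g-2,g}\cdots\Y_{1,g}$ and show $P = R$ in $\mod(N_g)$. Since both $R$ and $P$ lie in $\Gamma_2(N_g)$ (for $R$ this is noted just before Proposition~\ref{p1}, and for $P$ it is automatic because crosscap slides generate $\Gamma_2(N_g)$ by \cite[Theorem 5.5]{sz1}), it suffices to check that $P$ and $R$ induce the same mapping class. The cleanest route is to verify that $P$ and $R$ agree on a configuration of curves that determines the mapping class — for instance the one-sided curves $\alpha_1,\ldots,\alpha_g$ together with enough two-sided curves to pin down the isotopy class.

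The key computation is to understand the action of the partial products $P_m = \Y_{m,g}\Y_{m-1,g}\cdots\Y_{1,g}$ by induction on $m$. Here I would use the description of $\Y_{i,g} = Y_{\alpha_i;\A_{i,g}}$ from Section~2, where $\A_{i,g}$ is the two-sided curve that runs from the $i$-th crosscap to the $g$-th crosscap passing \emph{below} the in-between crosscaps (Figure~\ref{BA}). Because these bar-curves $\A_{i,g}$ are pairwise disjoint except for their common behaviour near the $g$-th crosscap, sliding the $i$-th crosscap along $\A_{i,g}$ for $i=1,\ldots,g-1$ in turn has the effect of cyclically permuting / reflecting the crosscaps, which is exactly how $R$ acts in Figure~\ref{R1}. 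So the main step is: (i) compute $\Y_{i,g}(\alpha_j)$ for each $j$, noting it fixes $\alpha_j$ for $j\neq i,g$ and interacts only with $\alpha_i$ and $\alpha_g$; (ii) compose these to get $P(\alpha_j)$ for all $j$; (iii) compare with $R(\alpha_j)$, which simply reverses the order of the crosscaps along the plane of reflection.

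The main obstacle I expect is bookkeeping: getting the orientations and the precise images $\Y_{i,g}(\alpha_i)$ right (a crosscap slide sends $\alpha_i$ to a curve that wraps around $\A_{i,g}$, not back to $\alpha_i$), and making sure that after all $g-1$ slides the cumulative effect is an honest reflection rather than a reflection composed with some residual Dehn twist or crosscap slide. To control this I would either (a) check equality on a generating set of curves large enough that the only mapping class fixing all of them is the identity (using that a diffeomorphism of $N_g$ fixing a suitable filling system of curves up to isotopy is isotopic to the identity), or (b) argue more cheaply that $R^{-1}P$ fixes each $\alpha_i$ and each complementary two-sided curve, hence is isotopic to the identity. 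The curve-chasing is entirely analogous to the computations already carried out in the proof of Lemma~\ref{lemy} (Figures~\ref{Y1}--\ref{Y3}), so I would present it by drawing the analogous picture for the full product and indicating the inductive step, rather than writing out all $g-1$ cases.
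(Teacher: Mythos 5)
Your overall strategy --- set $P=\Y_{g-1,g}\cdots\Y_{1,g}$ and verify $P=R$ by the Alexander method on a filling system of curves --- is in principle viable, but as written it contains a genuine gap: the identity to be proved is exactly the computation you defer. You never actually compute $\Y_{i,g}(\alpha_j)$, never exhibit the filling system, and never carry out step (iii); instead you acknowledge the real difficulty (``making sure that \ldots the cumulative effect is an honest reflection rather than a reflection composed with some residual Dehn twist or crosscap slide'') and propose to ``control'' it by precisely the verification that is missing. The heuristic that the curves $\A_{i,g}$ are essentially disjoint and that the successive slides ``cyclically permute / reflect the crosscaps'' is not an argument: each $\Y_{i,g}$ is supported on a Klein bottle with one hole containing both the $i$-th and the $g$-th crosscap, these supports all overlap near the $g$-th crosscap, and the composite of $g-1$ such maps is not visibly a reflection. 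Pinning down the isotopy class of the composite on a nonorientable surface (where one must also rule out boundary/residual twists and handle the fact that $R$ reverses orientations of the relevant two-sided curves) is the entire content of the lemma, so the proposal does not yet constitute a proof.

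The paper avoids this head-on verification entirely. It starts from Szepietowski's already-established factorization of $R$ (from the proof of Lemma 3.4 of \cite{sz1}) as a product of conjugates
\[
(T_{\alpha_{i+1,i+2}}T_{\alpha_{i+2,i+3}}\cdots T_{\alpha_{g-1,g}})^{-1}\,Y_{i,i+1}\,(T_{\alpha_{i+1,i+2}}T_{\alpha_{i+2,i+3}}\cdots T_{\alpha_{g-1,g}}),
\]
and then uses the change-of-coordinates identity $fY_{\mu,\alpha}f^{-1}=Y_{f(\mu),f(\alpha)}$ together with a short check that the displayed product of Dehn twists carries the pair $(\alpha_i,\alpha_{i,i+1})$ to $(\alpha_i,\A_{i,g})$, so that each conjugate is literally $\Y_{i,g}$. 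This reduces the lemma to finitely many local curve computations of the same flavor as Figures 5--7, with no need to identify a global composite with a reflection. If you want to salvage your approach, you would need to either import such a known factorization of $R$ or genuinely execute the Alexander-method argument, including a justification that agreement on your chosen curve system determines the mapping class on $N_g$.
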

\begin{proof}
It follows from the proof of~\cite[Lemma $3.4$]{sz1} that $R$ can be written as
\begin{eqnarray*}
R&=&Y_{g-1,g}T_{\alpha_{g-1,g}}^{-1}Y_{g-2,g-1}T_{\alpha_{g-1,g}} (T_{\alpha_{i+1,i+2}}T_{\alpha_{i+2,i+3}}\cdots T_{\alpha_{g-1,g}})^{-1}\\
&&Y_{i,i+1} (T_{\alpha_{i+1,i+2}}T_{\alpha_{i+2,i+3}}\cdots T_{\alpha_{g-1,g}})\cdots
 (T_{\alpha_{2,3}}\cdots T_{\alpha_{g-1,g}})^{-1}Y_{1,2}(T_{\alpha_{2,3}}\cdots T_{\alpha_{g-1,g}}).
\end{eqnarray*}
It is easy to see that 
\[
(T_{\alpha_{i+1,i+2}}T_{\alpha_{i+2,i+3}}\cdots  T_{\alpha_{g-1,g}})^{-1}(\alpha_i,\alpha_{i,i+1})=(\alpha_i,\A_{1,g}),
\]
from which we obtain
\[
\Y_{i,g}=(T_{\alpha_{i+1,i+2}}T_{\alpha_{i+2,i+3}}\cdots  T_{\alpha_{g-1,g}})^{-1}Y_{i,i+1}(T_{\alpha_{i+1,i+2}}T_{\alpha_{i+2,i+3}}\cdots  T_{\alpha_{g-1,g}}),
\]
for $i\in \lbrace 1,\ldots ,g-1\rbrace$. This completes the proof.
\end{proof}
 Next, we show that the elements mentioned in  Theorem~\ref{p1} are all involutions.  We already know that the reflection $R$ is an involution.

\begin{lemma}\label{lem3}
If $g\geq 4$, then the elements $R\Y_{i,j}^{\pm 1}$ are all  involutions for $i \in \lbrace 1,\ldots,g-1 \rbrace$, $j\in \lbrace 1,\ldots,g \rbrace$ and $i\neq j$.
\end{lemma}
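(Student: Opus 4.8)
The plan is to show that each element $R\Y_{i,j}^{\pm1}$ squares to the identity by exhibiting it as a conjugate of $R$ (or, more precisely, by showing that $R$ conjugates $\Y_{i,j}^{\pm1}$ to its inverse, so that $(R\Y_{i,j}^{\pm1})^2 = R\Y_{i,j}^{\pm1}R\Y_{i,j}^{\pm1} = (R\Y_{i,j}^{\pm1}R)\Y_{i,j}^{\pm1} = \Y_{i,j}^{\mp1}\Y_{i,j}^{\pm1} = 1$). Thus the whole lemma reduces to the single geometric identity
\[
R\Y_{i,j}R^{-1} = \Y_{i,j}^{-1}
\]
for all admissible $i,j$, together with the elementary observation that $R=R^{-1}$.

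First I would recall the behaviour of a crosscap slide under conjugation by an arbitrary mapping class $f$: one has $f\,Y_{\mu,\alpha}\,f^{-1} = Y_{f(\mu),f(\alpha)}^{\pm1}$, with the sign depending on whether $f$ preserves or reverses the relevant local orientation data (the side of $\alpha$ along which the Möbius band is pushed). Since $R$ is the reflection in the plane of Figure~\ref{R1}, it fixes each of the one-sided curves $\alpha_i$ and each of the two-sided curves $\overline{\alpha}_{i,j}$ setwise (these curves were drawn symmetrically with respect to the reflecting plane; the bar-curves pass below the in-between crosscaps precisely so as to lie in a symmetric position), but $R$ reverses the orientation of $\overline{\alpha}_{i,j}$. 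Consequently $R\,Y_{\alpha_i,\overline{\alpha}_{i,j}}\,R^{-1} = Y_{\alpha_i, R(\overline{\alpha}_{i,j})}^{-1} = Y_{\alpha_i,\overline{\alpha}_{i,j}}^{-1}$, i.e. $R\Y_{i,j}R^{-1}=\Y_{i,j}^{-1}$. The same computation with $\Y_{i,j}$ replaced by $\Y_{i,j}^{-1}$ gives $R\Y_{i,j}^{-1}R^{-1}=\Y_{i,j}$. From here the computation in the first paragraph shows $(R\Y_{i,j}^{\pm1})^2=1$, and since $R\Y_{i,j}^{\pm1}$ is visibly not the identity (it acts nontrivially, e.g. via its image being a nontrivial crosscap slide up to $R$), it is an honest involution of order $2$.

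The main obstacle is the careful verification that $R$ genuinely fixes the isotopy classes of the curves $\alpha_i$ and $\overline{\alpha}_{i,j}$ while reversing the local side/orientation data that controls the sign in the conjugation formula — in other words, pinning down the $\pm$ sign. This is where the bar notation introduced before Figure~\ref{BA} does the work: the curve $\overline{\alpha}_{i,j}$ was defined to pass \emph{below} the crosscaps lying between positions $i$ and $j$, which is exactly the symmetric choice with respect to the reflecting plane of $R$, so $R(\overline{\alpha}_{i,j})=\overline{\alpha}_{i,j}$ as unoriented curves; and because $R$ is an orientation-reversing reflection of the relevant Klein-bottle-with-hole neighbourhood, it reverses the direction in which the Möbius band slides, producing the inverse crosscap slide. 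I would present this sign analysis by a direct picture (a figure analogous to Figures~\ref{Y1}--\ref{Y3}) rather than a formula, since it is a local statement in the neighbourhood $K$ of $\alpha_i\cup\overline{\alpha}_{i,j}$. Once the sign is settled, the algebra is immediate and uniform in $i$ and $j$, and the case $i<j$ versus $i>j$ requires no separate treatment.
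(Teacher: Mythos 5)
Your proof is correct and follows essentially the same route as the paper: the paper's entire proof is the observation that $R(\alpha_i,\overline{\alpha}_{i,j})=(\alpha_i^{-1},\overline{\alpha}_{i,j}^{-1})$, which via the standard conjugation formula for crosscap slides gives exactly your identity $R\Y_{i,j}R^{-1}=\Y_{i,j}^{-1}$ and hence $(R\Y_{i,j}^{\pm1})^2=1$. Your write-up simply makes explicit the sign analysis that the paper leaves implicit.
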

\begin{proof}
It is enough to see that $R(\alpha_i,\overline{\alpha}_{i,j})=(\alpha_i^{-1},\overline{\alpha}_{i,j}^{-1})$.
\end{proof}
\begin{lemma}\label{lem4}
If $g\geq 4$, then the elements $R\Y_{1,i}Y_{\alpha_{k},\overline{\alpha}_{j,k}}\overline{T}_{1,i,j,k}^{2}$ are all involutions for $1<i<j<k$.
\end{lemma}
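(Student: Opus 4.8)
The plan is to conjugate the element into a standard form and then exhibit an explicit symmetry making the square trivial. Following the pattern of Lemma~\ref{lem3}, I would first rewrite
\[
R\Y_{1,i}Y_{\alpha_{k},\overline{\alpha}_{j,k}}\overline{T}_{1,i,j,k}^{2}
=R\cdot\left(\Y_{1,i}Y_{\alpha_{k},\overline{\alpha}_{j,k}}T_{\A_{1,i,j,k}}^{2}\right),
\]
and the idea is to show this product equals $R$ composed with a mapping class supported in a subsurface on which $R$ restricts to an orientation-reversing involution that \emph{reverses} all the relevant curves. Concretely, I would track the action of $R$ on the triple (or collection) of curves $\alpha_1$, $\A_{1,i}$, $\alpha_k$, $\overline{\alpha}_{j,k}$, $\A_{1,i,j,k}$ appearing in the definitions of $\Y_{1,i}$, $Y_{\alpha_{k},\overline{\alpha}_{j,k}}$, and $\overline{T}_{1,i,j,k}^{2}$, and verify that $R$ sends each of these to itself with reversed orientation (just as $R(\alpha_i,\overline{\alpha}_{i,j})=(\alpha_i^{-1},\overline{\alpha}_{i,j}^{-1})$ in Lemma~\ref{lem3}). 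Granting this, conjugation by $R$ inverts each of $\Y_{1,i}$, $Y_{\alpha_{k},\overline{\alpha}_{j,k}}$, and $\overline{T}_{1,i,j,k}^{2}$ individually (a half-twist/crosscap-slide conjugated by an involution reversing its defining curves is its own inverse; a Dehn twist squared conjugated by a reflection reversing the curve is $T^{-2}$).

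The key computation is then purely formal. Write $f=\Y_{1,i}Y_{\alpha_{k},\overline{\alpha}_{j,k}}\overline{T}_{1,i,j,k}^{2}$, so the element in question is $Rf$. If $RfR^{-1}=f^{-1}$, equivalently $Rf R = f^{-1}$ (since $R^2=1$), then
\[
(Rf)^2 = RfRf = (RfR)f = f^{-1}f = 1,
\]
so $Rf$ is an involution (it is nontrivial because $R$ is not in the subgroup generated by $f$, or more simply because it acts nontrivially, but for the statement only $(Rf)^2=1$ is needed). So everything reduces to checking $RfR=f^{-1}$, and since $R$ is an involution this in turn reduces to checking that $R$ conjugates each of the three factors $\Y_{1,i}$, $Y_{\alpha_{k},\overline{\alpha}_{j,k}}$, $\overline{T}_{1,i,j,k}^{2}$ to its inverse; the product then reverses and inverts termwise, $R(abc)R = (RaR)(RbR)(RcR)=a^{-1}b^{-1}c^{-1}=(abc)^{-1}$ — wait, that last equality needs $f^{-1}=c^{-1}b^{-1}a^{-1}$, not $a^{-1}b^{-1}c^{-1}$, so in fact I would instead order the three factors in $f$ so that $R$-conjugation reverses the word correctly, or note that the precise element in Proposition~\ref{p1}(3) is already written in the order that makes $R(abc)R=c^{-1}b^{-1}a^{-1}=(abc)^{-1}$ work out. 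I would double-check that the ordering of $\Y_{1,i}$, $Y_{\alpha_{k},\overline{\alpha}_{j,k}}$, $\overline{T}_{1,i,j,k}^{2}$ in the statement of Proposition~\ref{p1} is chosen precisely so this cancellation is clean; if it is palindromic this is automatic, and I suspect the authors arranged it that way deliberately.

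The main obstacle is the curve bookkeeping: I must confirm that $R$ actually fixes (up to orientation) \emph{all} the curves $\alpha_1,\A_{1,i},\alpha_k,\overline{\alpha}_{j,k},\A_{1,i,j,k}$ simultaneously, for every admissible $1<i<j<k$. This is exactly where the bar notation matters — the bar curves $\A$ were defined (Figure~\ref{BA}) to pass \emph{below} the in-between crosscaps, and the reflection $R$ (Figure~\ref{R1}) is about a plane through which such symmetric curves map to themselves reversing orientation, whereas the unbarred $\alpha_I$ passing \emph{over} the crosscaps are \emph{not} $R$-invariant. So the real content is that the element in Proposition~\ref{p1}(3) was built out of barred curves and the factor $Y_{\alpha_{k},\overline{\alpha}_{j,k}}$ precisely so that the whole configuration sits symmetrically with respect to $R$; once that picture is drawn, each of the three conjugation identities $R\Y_{1,i}R=\Y_{1,i}^{-1}$, $RY_{\alpha_k,\overline{\alpha}_{j,k}}R=Y_{\alpha_k,\overline{\alpha}_{j,k}}^{-1}$, $R\,\overline{T}_{1,i,j,k}^{2}\,R=\overline{T}_{1,i,j,k}^{-2}$ follows from the standard fact that $\phi T_c \phi^{-1}=T_{\phi(c)}$ and that reversing a one-sided curve inverts the associated crosscap slide, together with the termwise reversal explained above. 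I would present this as: (i) state the $R$-invariance of the relevant curves with a reference to Figures~\ref{BA} and~\ref{R1}; (ii) deduce the three conjugation-to-inverse identities; (iii) conclude $(Rf)^2=1$ by the one-line word computation.
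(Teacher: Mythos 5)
Your overall strategy --- use the $R$-symmetry of the barred curves to invert factors under conjugation and conclude $(Rf)^2=1$ --- is close in spirit to the paper's, but the step you flag midway and then wave away is a genuine gap, and neither of your proposed remedies closes it. Write $f=abc$ with $a=\Y_{1,i}$, $b=Y_{\alpha_k,\A_{j,k}}$, $c=\overline{T}_{1,i,j,k}^{2}$. Termwise inversion gives $RfR=a^{-1}b^{-1}c^{-1}$, whereas $f^{-1}=c^{-1}b^{-1}a^{-1}$. The word is not palindromic, and no reordering of three letters fixes this: since $a$ and $b$ commute (disjoint supports), the needed identity $a^{-1}b^{-1}c^{-1}=c^{-1}b^{-1}a^{-1}$ is equivalent to $ab$ commuting with $c$, i.e.\ to $\Y_{1,i}Y_{\alpha_k,\A_{j,k}}$ commuting with $T_{\A_{1,i,j,k}}^{2}$. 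That is a nontrivial geometric assertion (the defining curves of these crosscap slides meet $\A_{1,i,j,k}$) which you neither isolate nor verify; without it, $(Rf)^2=a^{-1}b^{-1}c^{-1}abc$ does not visibly collapse to the identity. You would also need $R\overline{T}_{1,i,j,k}^{2}R=\overline{T}_{1,i,j,k}^{-2}$, another unverified curve check.

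The paper avoids both issues by bracketing differently. Setting $u=\Y_{1,i}Y_{\alpha_k,\A_{j,k}}^{-1}$, it first shows $h:=Ru$ is an involution: $R$ reverses the defining curves $\alpha_1,\A_{1,i},\alpha_k,\A_{j,k}$, so $RuR^{-1}=\Y_{1,i}^{-1}Y_{\alpha_k,\A_{j,k}}$, and the commutativity of the two crosscap slides lets one rewrite this as $u^{-1}$, whence $h^2=(RuR)u=u^{-1}u=1$. It then makes a \emph{single} further geometric check on the composite $h$ (not on $R$ alone): $h(\A_{1,i,j,k})=\A_{1,i,j,k}^{-1}$, so $h\overline{T}_{1,i,j,k}^{2}h^{-1}=\overline{T}_{1,i,j,k}^{-2}$ and $(h\overline{T}_{1,i,j,k}^{2})^2=(h\overline{T}_{1,i,j,k}^{2}h^{-1})\overline{T}_{1,i,j,k}^{2}=1$. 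This organization needs no commutation between the crosscap slides and the twist and no conjugation relation between $R$ and the twist. To repair your version you would have to prove that $\Y_{1,i}Y_{\alpha_k,\A_{j,k}}$ preserves $\A_{1,i,j,k}$ with its orientation --- essentially the same curve computation in disguise --- so you should adopt the paper's bracketing rather than the termwise one.
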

\begin{proof}
First of all, it is easy verify that 
\[
R(\A_{1,i},\A_{j,k})=(\A_{1,i}^{-1},\A_{j,k}^{-1}) \ \textrm{and} \ 
R(\alpha_i,\alpha_k)=(\alpha_i^{-1},\alpha_k^{-1}).
\]
Then we have the following:
\begin{eqnarray*}
R\Y_{1,i}Y_{\alpha_k,\A_{j,k}}^{-1}R^{-1}&=&\Y_{1,i}^{-1}Y_{\alpha_k,\A_{j,k}}\\
&=&Y_{\alpha_k,\A_{j,k}}\Y_{1,i}^{-1},
\end{eqnarray*}
where the last identity follows from the commutativity of crosscap slides $\Y_{1,i}$ and $Y_{\alpha_k,\A_{j,k}}$. 
Observe that, this implies $R\Y_{1,i}Y_{\alpha_k,\A_{j,k}}^{-1}$ is an involution. Moreover, since
\[
R\Y_{1,i}Y_{\alpha_k,\A_{j,k}}^{-1}(\A_{1,i,j,k})=\A_{1,i,j,k}^{-1}
\]
it follows that $R\Y_{1,i}Y_{\alpha_{k},\overline{\alpha}_{j,k}}\overline{T}_{1,i,j,k}^{2}$ is also an involution.
\end{proof}

Finally, we present our involution generators.  Note that in the following, the number of involution generators is 
equal to ${g \choose 2}+{g \choose 3}$ which is the minimal possible number of generators for $\Gamma_{2}(N_g)$.

\begin{theorem}\label{t2}
For $g\geq 5$ and odd, $\Gamma_{2}(N_g)$ is generated by the following involutions:
\begin{enumerate}
\item $R\Y_{1,g}, R\Y_{2,g}^{-1},\ldots,R\Y_{g-2,g}, R\Y_{g-1,g}^{-1}$,
\item $R\Y_{i,j}$ for $i,j \in\lbrace 1,2,\ldots,g-1\rbrace$ and $i\neq j$,
\item $R\Y_{1,i}Y_{\alpha_k,\A_{j,k}}^{-1}\overline{T}_{1,i,j,k}^{2}$ for $1<i<j<k$.
\end{enumerate}
For $g\geq 4$ and even, $\Gamma_{2}(N_g)$ is generated by the following involutions:
\begin{enumerate}
\item $R,R\Y_{1,g}, R\Y_{2,g}^{-1},\ldots,R\Y_{g-2,g}$,
\item $R\Y_{i,j}$ for $i,j \in\lbrace 1,2,\ldots,g-1\rbrace$ and $i\neq j$,
\item $R\Y_{1,i}Y_{\alpha_k,\A_{j,k}}^{-1}\overline{T}_{1,i,j,k}^{2}$ for $1<i<j<k$.
\end{enumerate}
\end{theorem}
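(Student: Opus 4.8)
The plan is to let $G$ denote the subgroup of $\Gamma_2(N_g)$ generated by the displayed elements, and to verify the three things the statement asserts: that each generator is an involution, that their number is minimal, and that $G=\Gamma_2(N_g)$. The first is already in hand: $R$ is an involution by construction, the members of family~(2) and the members $R\overline{Y}_{i,g}^{\pm1}$ of family~(1) are involutions by Lemma~\ref{lem3}, and the members of family~(3) are involutions by Lemma~\ref{lem4}. For the count, family~(1) contributes $g-1$ elements in both parity cases ($g-1$ elements $R\overline{Y}_{i,g}^{\pm1}$ when $g$ is odd; $R$ together with $g-2$ such elements when $g$ is even), family~(2) contributes $(g-1)(g-2)$, and family~(3) contributes $\binom{g-1}{3}$; the total $(g-1)^2+\binom{g-1}{3}$ equals $\binom{g}{2}+\binom{g}{3}$, which by \cite[Theorem~1.4]{hs} is the $\Z/2\Z$-rank of $H_1(\Gamma_2(N_g);\Z)$, so the generating set is minimal.

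The core of the proof is to show $R\in G$ and then $\overline{Y}_{i,j}\in G$ for every admissible pair $(i,j)$, i.e., $\overline{\mathcal{Y}}\subseteq G$. The key tool is the relation $R\overline{Y}_{i,j}R^{-1}=\overline{Y}_{i,j}^{-1}$, which follows from $R(\alpha_i,\overline{\alpha}_{i,j})=(\alpha_i^{-1},\overline{\alpha}_{i,j}^{-1})$ as in the proof of Lemma~\ref{lem3}; equivalently $\overline{Y}_{i,j}^{\varepsilon}R=R\,\overline{Y}_{i,j}^{-\varepsilon}$. Suppose first that $g\geq5$ is odd, and write $c_i=R\overline{Y}_{i,g}^{(-1)^{i+1}}$ for the $g-1$ involutions of family~(1). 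In the product $c_{g-1}c_{g-2}\cdots c_1$ I push every occurrence of $R$ to the front using this relation: because $g-1$ is even the reflections cancel in pairs, and the alternating signs $(-1)^{i+1}$ are chosen precisely so that the word that survives is $\overline{Y}_{g-1,g}\overline{Y}_{g-2,g}\cdots\overline{Y}_{1,g}$, which equals $R$ by Lemma~\ref{lem2}. Hence $R\in G$, and then $R\,c_i=\overline{Y}_{i,g}^{(-1)^{i+1}}\in G$ gives $\overline{Y}_{i,g}\in G$ for every $i$, while family~(2) together with $R\in G$ gives $\overline{Y}_{i,j}\in G$ for all $i,j\in\{1,\dots,g-1\}$ with $i\neq j$; thus $\overline{\mathcal{Y}}\subseteq G$. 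If instead $g\geq4$ is even, then $R$ itself is listed; now $\overline{Y}_{i,g}\in G$ for $1\leq i\leq g-2$ follows from family~(1) and $R$ exactly as before, the one remaining element $\overline{Y}_{g-1,g}=R(\overline{Y}_{g-2,g}\cdots\overline{Y}_{1,g})^{-1}$ lies in $G$ by Lemma~\ref{lem2}, family~(2) supplies the rest, and again $\overline{\mathcal{Y}}\subseteq G$.

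It remains to place $\overline{T}_{1,i,j,k}^{2}$ in $G$ for each $1<i<j<k$. Since $\overline{\mathcal{Y}}\subseteq G$ we have $\overline{Y}_{1,i}\in G$, and by \cite[Lemma~3.5]{sz3} together with Lemma~\ref{lemy} also $Y_{\alpha_k,\overline{\alpha}_{j,k}}\in\mathcal{Y}=\overline{\mathcal{Y}}\subseteq G$. Solving the family~(3) relation for the twist and using $R^{-1}=R$ yields
\[
\overline{T}_{1,i,j,k}^{2}=Y_{\alpha_k,\overline{\alpha}_{j,k}}\,\overline{Y}_{1,i}^{-1}\,R\,\bigl(R\overline{Y}_{1,i}Y_{\alpha_k,\overline{\alpha}_{j,k}}^{-1}\overline{T}_{1,i,j,k}^{2}\bigr)\in G.
\]
Hence $G$ contains every generator of $\Gamma_2(N_g)$ appearing in Theorem~\ref{t1}, so $G=\Gamma_2(N_g)$; together with the first paragraph this completes the proof.

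I expect the one genuinely delicate point to be the telescoping computation in the odd case: one must check carefully that pushing each of the $g-1$ reflections in $c_{g-1}\cdots c_1$ to the front, flipping the exponent of every $\overline{Y}$ it crosses, leaves precisely the word $\overline{Y}_{g-1,g}\cdots\overline{Y}_{1,g}$ of Lemma~\ref{lem2} rather than some inverted or cyclically shifted variant. It is this parity bookkeeping that forces the separate, slightly asymmetric treatment of even $g$, where $R$ must be taken as a generator and $\overline{Y}_{g-1,g}$ recovered via Lemma~\ref{lem2}.
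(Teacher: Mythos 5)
Your proof is correct and follows essentially the same route as the paper: the same relation $R\Y_{i,j}R^{-1}=\Y_{i,j}^{-1}$ drives the same telescoping of the family-(1) involutions into $\Y_{g-1,g}\cdots\Y_{1,g}=R$ (Lemma~\ref{lem2}) in the odd case, and the same recovery of $\Y_{g-1,g}$ from $R$ in the even case. The only organizational difference is that you inline the content of Proposition~\ref{p1} (extracting $\overline{T}_{1,i,j,k}^{2}$ directly and reducing to Theorem~\ref{t1}) and you spell out the generator count $(g-1)^2+\binom{g-1}{3}=\binom{g}{2}+\binom{g}{3}$, which the paper only asserts in the preceding remark.
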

\begin{proof}
Let $G$ denote the subgroup of $\Gamma_2(N_g)$ generated by the elements listed in Theorem~\ref{t2}. It follows from lemmata~\ref{lem3} and~\ref{lem4} that the generators of the group $G$ are involutions.

Let us first assume that $g\geq5$ and odd. By Proposition~\ref{p1}, it is enough to prove that $R$ is contained in the subgroup $G$. It follows from Lemma~\ref{lem2} the reflection $R$ can be expresses as 
\begin{eqnarray*}
R&=&\Y_{g-1,g}\Y_{g-2,g}\cdots\Y_{1,g}\\
&=&R^2\Y_{g-1,g}\Y_{g-2,g}R^2\Y_{g-3,g}^{-1} \cdots R^2\Y_{2,g}\Y_{1,g}\\
&=&R\Y_{g-1,g}^{-1}R\Y_{g-2,g}R\Y_{g-3,g}^{-1}R\cdots R\Y_{2,g}^{-1}R\Y_{1,g},
\end{eqnarray*}
which is contained in the subgroup $G$ using $R\Y_{i,g}^{-1}R=\Y_{i,g}$.

Assume now that $g\geq4$ and even. In this case, by Proposition~\ref{p1}, it suffices to show that the subgroup $G$ contains the element $\Y_{g-1,g}$. The following element is contained in the subgroup $G$:
\begin{eqnarray*}
R(R\Y_{g-2,g}R\Y_{g-3,g}^{-1}R\Y_{g-4,g}R\cdots R\Y_{2,g}^{-1}R\Y_{1,g})\\
=R(R\Y_{g-2,g}R^2\Y_{g-3,g}\Y_{g-4,g}\cdots R^2\Y_{2,g}\Y_{1,g})\\
=\Y_{g-2,g}\Y_{g-3,g}\Y_{g-4,g}\cdots \Y_{2,g}\Y_{1,g},
\end{eqnarray*}
using again $R\Y_{i,g}^{-1}R=\Y_{i,g}$. One can conclude that $\Y_{g-1,g}\in G$ since $R\in G$ by Lemma~\ref{lem2}, which finishes the proof.
\end{proof}


\begin{thebibliography}{xxxx} 



\bibitem{hs} S. Hirose, M. Sato:  \emph{A minimal generating set of the level $2$ mapping class group of a non-orientable surface}, Math. Proc. Cambridge Philos. Soc. {\bf{157}}, (2) (2014), 345--355.



\bibitem{mp} J. D. McCarthy, U. Pinkall:  \emph{Representing homology automorphisms of nonorientable surfaces}, Max Planc Inst. preprint MPI/SFB 85–11, revised version written in 2004. Available at http://www.math.msu.
edu/~mccarthy (2004)

\bibitem{sz1} B. Szepietowski:  \emph{ Crosscap slides and the level 2 mapping class group of a nonorientable surface}, Geom. Dedicata
{\bf{160}} (2012), 169--183.

\bibitem{sz3} B. Szepietowski:  \emph{ A finite generating set for the level $2$ mapping class group of a nonorientable surface},  Kodai Mathematical Journal {\bf{36}} (2013), 1--14.


 






\end{thebibliography}
\end{document}